\newtheorem{theorem}{Theorem}[section]
\newtheorem{lemma}[theorem]{Lemma}
\newtheorem{proposition}[theorem]{Proposition}
\theoremstyle{definition}
\newtheorem{definition}[theorem]{Definition}
\theoremstyle{remark}
\newtheorem{remark}[theorem]{Remark}
\newcommand{\norm}[1]{\left\lVert#1\right\rVert}
\newcommand{\remin}{\mathop{-\!\!\!\!\!\hspace*{1mm}\raisebox{0.5mm}{$
\cdot$}}\nolimits}
\newcommand{\N}{{\mathbb N}}
\begin{document}

\title[On the asymptotic behavior of second-order Cauchy problems]{Rates of convergence for the asymptotic behavior of second-order Cauchy problems}

\author[Nicholas Pischke]{Nicholas Pischke}
\date{\today}
\maketitle
\vspace*{-5mm}
\begin{center}
{\scriptsize Department of Mathematics, Technische Universit\"at Darmstadt,\\
Schlossgartenstra\ss{}e 7, 64289 Darmstadt, Germany, \ \\ 
E-mail: pischke@mathematik.tu-darmstadt.de}
\end{center}

\maketitle
\begin{abstract}
We provide a quantitative version of a result due to Poffald and Reich on the asymptotic behavior of solutions of a second-order Cauchy problem generated by an accretive operator in the form of a rate of convergence. This quantitative result is then used to generalize a result of Xu on the asymptotic behavior of almost-orbits of the solution semigroup of a first-order Cauchy problem to this second-order case.
\end{abstract}
\noindent
{\bf Keywords:} Accretive operators; Nonlinear semigroups; Second-order Cauchy problems; Rates of convergence; Proof mining\\ 
{\bf MSC2010 Classification:} 47H06; 35F25; 47H20; 03F10

\section{Introduction}

One of the fundamental questions in the theory of differential equations is that of the asymptotic behavior of the solutions to a particular system, e.g., for the well-studied (see \cite{Bar1976,Paz1983} for canonical references among many others) first-order system
\[
\begin{cases}
u'(t)\in -Au(t),\, 0<t<\infty\\
u(0)=x
\end{cases}
\tag{$\dagger$}
\]
over a Banach space $X$ generated by an initial value $x\in X$ and an accretive set-valued operator $A:X\to 2^X$. By the fundamental results of Brezis and Pazy \cite{BP1970} as well as Crandall and Liggett \cite{CL1971b}, the main tool used in the study of such a system is the semigroup $\mathcal{S}=\{S(t)\mid t\geq 0\}$ on $\overline{\mathrm{dom}A}$ generated by $A$ via the Crandall-Liggett exponential formula
\[
S(t)x=\lim_{n\to\infty}\left(\mathrm{Id}+\frac{t}{n}A\right)^{-n}x.
\]
In this paper, we are mainly concerned with this associated question of the asymptotic behavior of the corresponding generalized solutions $S(t)x$ for $t\to\infty$ in the context of uniformly convex and uniformly smooth spaces.\\

As is well-known, these semigroups do not converge asymptotically, even in the case of simple operators over Hilbert spaces and since the late 1970s, there has been a search for suitable conditions on both spaces and operators such that the convergence of the orbits of the solution semigroup can be guaranteed. One influential work in that context is that of Pazy \cite{Paz1978} where he introduced the so-called convergence condition for the operator $A$: over a Hilbert space $X$ with inner product $\langle\cdot,\cdot\rangle$, a maximally monotone operator $A$ is said to satisfy the convergence condition if for all bounded sequences $(x_n,y_n)\subseteq A$ with
\[
\lim_{n\to\infty}\langle y_n,x_n-Px_n\rangle=0,
\]
it holds that $\liminf_{n\to\infty}\norm{x_n-Px_n}=0$ where $P$ is the projection onto the closed and convex set $A^{-1}0$. Under that assumption, Pazy in \cite{Paz1978} showed the strong convergence of $S(t)x$ to a zero of the operator $A$ for $x\in\overline{\mathrm{dom}A}$ where $S$ is the semigroup generated by $A$ via the exponential formula as above. The convergence condition and the associated result on the asymptotic behavior of $S$ have subsequently been extended to the context of uniformly convex and uniformly smooth Banach spaces by Nevanlinna and Reich in \cite{NR1979} by modifying the premise to the assumption that 
\[
\lim_{n\to\infty}\langle y_n,J(x_n-Px_n)\rangle=0
\]
where $J$ is the normalized-duality map of $X$.\\

In this paper, we are concerned with a result due to Poffald and Reich \cite{PR1986} which generalizes the work of Nevanlinna and Reich to incomplete second-order Cauchy problems. Namely, for the second-order system 
\[
\begin{cases}
u''(t)\in Au(t),\;0<t<\infty,\\
u(0)=x,\\
\sup\{\norm{u(t)}\mid t\geq 0\}<\infty,
\end{cases}\tag{$\dagger$}
\]
over a uniformly smooth and uniformly convex Banach space $X$ with a strongly monotone duality map $J$ and $A$ m-accretive as before, the solution set 
\[
\mathcal{S}=\{u_x(t)\mid u_x\text{ is a solution almost everywhere with initial value }x\}
\]
is a nonlinear semigroup for $x\in\mathrm{dom}A$ as shown in \cite{PR1986}. Thus, this semigroup is generated via the exponential formula by some unique m-accretive operator which is denoted by $A_{1/2}$ and called the square root of $A$. Similarly, we write $\mathcal{S}_{1/2}$ for this semigroup. Various properties of this semigroup and the accompanying system were exhibited in \cite{PR1986}, generalizing previous work in the context of Hilbert spaces by Barbu \cite{Bar1972} as well as Brezis \cite{Bre1972}. In particular, Poffald and Reich obtained the following result on the asymptotic behavior of the semigroup:

\begin{theorem}[Poffald and Reich \cite{PR1986}]\label{thm:PR}
Let $X$ be uniformly convex and uniformly smooth with a strongly monotone duality map $J$ and $A$ be $m$-accretive with $A^{-1}0\neq\emptyset$ and such that it satisfies the convergence condition. If $\mathcal{S}_{1/2}=\{S_{1/2}(t)\mid t\geq 0\}$ is the semigroup generated by $A_{1/2}$ via the exponential formula as above, then $S_{1/2}(t)x$ converges strongly to a zero of $A$ for $t\to\infty$ for any $x\in\overline{\mathrm{dom}A}$.
\end{theorem}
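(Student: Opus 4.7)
The plan is first to reduce, by density of $\mathrm{dom}\, A$ in $\overline{\mathrm{dom}\, A}$ together with the nonexpansivity of $S_{1/2}(t)$, to the case $x \in \mathrm{dom}\, A$, where $u(t) := S_{1/2}(t)x$ is an almost-everywhere solution of the bounded second-order problem. The central observation is that, for any $p \in A^{-1}0$, the function $\varphi_p(t) := \norm{u(t) - p}^2$ is convex in $t$: its formal second derivative is $2\norm{u'(t)}^2 + 2\langle u''(t), J(u(t)-p)\rangle$, and both summands are nonnegative (the second by accretivity of $A$ applied to $(u(t), u''(t)) \in A$ and $(p, 0) \in A$). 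Since $u$ is bounded by hypothesis, $\varphi_p$ is a bounded convex function on $[0,\infty)$, hence nonincreasing with a finite limit. Comparing with the choice $p = Pu(s)$ and using the triangle inequality, one also obtains that $t \mapsto \norm{u(t) - Pu(t)} = \mathrm{dist}(u(t), A^{-1}0)$ is nonincreasing.

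Integrating the second-derivative identity on $[0, T]$ and letting $T \to \infty$ yields
\[
\int_0^\infty \norm{u'(s)}^2\, ds < \infty \quad\text{and}\quad \int_0^\infty \langle u''(s), J(u(s) - p)\rangle\, ds < \infty,
\]
so that along some sequence $t_n \to \infty$ one has $\langle u''(t_n), J(u(t_n) - p)\rangle \to 0$. The technical heart of the argument is then to pass from this fixed reference $p$ to the varying projection $Pu(t_n)$ in the duality bracket: since $\{Pu(t_n)\}$ lies in a bounded subset of $A^{-1}0$, one may use the uniform continuity of $J$ on bounded sets (guaranteed by uniform smoothness of $X$) together with the strong monotonicity of $J$ to replace $J(u(t_n) - p)$ by $J(u(t_n) - Pu(t_n))$ along a further subsequence, up to a vanishing error.

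With this in hand, $(u(t_n), u''(t_n))$ is a bounded sequence in $A$ satisfying $\langle u''(t_n), J(u(t_n) - Pu(t_n))\rangle \to 0$, and the convergence condition yields $\liminf_n \norm{u(t_n) - Pu(t_n)} = 0$. Combined with the monotonicity from the first paragraph, this upgrades to $\mathrm{dist}(u(t), A^{-1}0) \to 0$ as $t \to \infty$. For strong convergence of $u(t)$ itself, one observes that for $s \leq t$,
\[
\norm{Pu(t) - Pu(s)} \leq \norm{Pu(t) - u(t)} + \norm{u(t) - Pu(s)} \leq \mathrm{dist}(u(t), A^{-1}0) + \mathrm{dist}(u(s), A^{-1}0),
\]
using that $r \mapsto \norm{u(r) - Pu(s)}$ is nonincreasing for $r \geq s$. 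Hence $\{Pu(t)\}$ is Cauchy, converging strongly to some $p^* \in A^{-1}0$, and $u(t) \to p^*$ strongly as well.

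The main obstacle is the second paragraph's passage from the fixed $p$ to the varying projection in the duality bracket. In Hilbert space this is straightforward via the orthogonality characterization of the projection; in the present Banach setting, it is precisely where the strong monotonicity of $J$ and the uniform smoothness of $X$ enter essentially, and it is presumably the step that absorbs the bulk of the quantitative analysis in the effective version that this paper aims to provide.
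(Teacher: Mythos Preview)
Your overall architecture matches the paper's: reduce to $x\in\mathrm{dom}A$, show that $t\mapsto\mathrm{dist}(u(t),A^{-1}0)$ is nonincreasing, invoke the convergence condition along a suitable sequence to drive this distance to zero, and then deduce that $u(t)$ is Cauchy. The difficulty is exactly where you locate it, but your proposed resolution does not close, and the paper (following Poffald--Reich) takes a different route.

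There are two linked gaps. First, the convergence condition requires the pairs $(u(t_n),u''(t_n))\in A$ to be \emph{bounded}, and you assert this without justification. Your convexity argument for $\varphi_p$ yields only $\int_0^\infty\|u'\|^2<\infty$ and $\int_0^\infty\langle u'',J(u-p)\rangle<\infty$; neither controls $\|u''\|$ along any sequence. Second, your passage from the fixed $p$ to the moving $Pu(t_n)$ via uniform continuity of $J$ cannot work as stated: uniform continuity only gives $J(u(t_n)-p)\approx J(u(t_n)-Pu(t_n))$ when $\|Pu(t_n)-p\|$ is small, which is not known, and even then one would need $\|u''(t_n)\|$ bounded to control the error in the pairing. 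Strong monotonicity of $J$ does not help here either.

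The paper sidesteps both issues by invoking the a priori estimate
\[
\int_0^\infty\|u''(t)\|^2\,\mathrm{dt}\leq \frac{2}{M^2}\,d(0,Ax)^{1/2}\|x\|^{3/2},
\]
which is obtained in \cite{PR1986} from the approximating systems $(\dagger)^r_p$ and is precisely where the strong monotonicity constant $M$ enters. With this estimate one works \emph{directly} with the moving projection: by Cauchy--Schwarz and the monotonicity of $\|u(t)-Pu(t)\|$,
\[
\int_0^\infty\langle u''(t),J(u(t)-Pu(t))\rangle^2\,\mathrm{dt}\leq \|x-Px\|^2\int_0^\infty\|u''(t)\|^2\,\mathrm{dt}<\infty,
\]
so one finds $t$ at which both $\|u''(t)\|$ and $\langle u''(t),J(u(t)-Pu(t))\rangle$ are simultaneously small. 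The first of these supplies the missing boundedness hypothesis for the convergence condition, and no transition from a fixed reference point $p$ is ever needed. In short, the $L^2$-bound on $u''$ coming from the construction of the solution is the essential input you are missing; the convexity computation alone does not produce it.
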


In this paper, we exhibit the quantitative content of this result by extracting an explicit and computable transformation from the proof of Theorem \ref{thm:PR} which translates a so-called \emph{modulus of the convergence condition} (which was introduced in \cite{PP2022} to provide a quantitative representation of the way in which the operator in question satisfies the convergence condition), together with some minor quantitative data, into a full rate of convergence for the strong convergence of $S_{1/2}(t)x$ to a zero of $A$.\\

This result was established through the use of methods developed in proof mining, a program in mathematical logic which aims at the extraction of quantitative information from prima facie nonconstructive proofs. This proof mining program goes back conceptually to Kreisel's program of unwinding of proofs from the 1950's and, in its modern form, has been systematically developed since the 1990's by Ulrich Kohlenbach and his collaborators and by now comprises a large number of applications, in particular in nonlinear analysis and optimization (see \cite{Koh2008} for a book treatment and \cite{Koh2019} for a recent survey).

In that vein, this work can in particular be viewed as a new case study in this program for the theory of differential equations and abstract Cauchy problems, an area which so far has only seen applications by methods from proof mining in the early work by Kohlenbach and Koutsoukou-Argyraki \cite{KKA2015} and the work by Pinto and the author \cite{PP2022}.\\

Going beyond the range of proof mining however, we are here further concerned with new generalizations of the theorem of Poffald and Reich. In \cite{Xu2001}, Xu studied the behavior of almost-orbits associated with the semigroup generated by $A$ as introduced by Miyadera and Kobayasi \cite{MK1982}: an almost-orbit of $\mathcal{S}$ is a continuous function $u:[0,\infty)\to\overline{\mathrm{dom}A}$ such that
\[
\lim_{s\to\infty}\sup\{\norm{u(t+s)-S(t)u(s)}\mid t\geq 0\}=0.
\]
Concretely, Xu obtained the following result which generalizes the result of Nevanlinna and Reich to almost-orbits in the sense of the above:

\begin{theorem}[Xu \cite{Xu2001}]\label{thm:Xu}
Let $X$ be uniformly convex and uniformly smooth and $A$ be $m$-accretive with $A^{-1}0\neq\emptyset$ and such that it satisfies the convergence condition. If $\mathcal{S}=\{S(t)\mid t\geq 0\}$ is the semigroup generated by $A$ via the exponential formula, then every almost-orbit $u(t)$ of $\mathcal{S}$ converges strongly as $t\to\infty$.
\end{theorem}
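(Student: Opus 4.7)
The plan is to reduce Theorem~\ref{thm:Xu} to the Nevanlinna--Reich extension of Pazy's theorem, which under the same hypotheses on $X$ and $A$ guarantees that for every $x\in\overline{\mathrm{dom}A}$ the orbit $t\mapsto S(t)x$ converges strongly to a zero of $A$. Given an almost-orbit $u:[0,\infty)\to\overline{\mathrm{dom}A}$, I would set $\delta(s):=\sup_{t\geq 0}\norm{u(t+s)-S(t)u(s)}$, which is eventually finite and tends to $0$ as $s\to\infty$ by the defining property of an almost-orbit.

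The core step is then as follows. For each fixed $s\geq 0$, apply the Nevanlinna--Reich result to the initial value $u(s)\in\overline{\mathrm{dom}A}$ to produce a point $z_s\in A^{-1}0$ with $S(t)u(s)\to z_s$ as $t\to\infty$; combined with the almost-orbit inequality this already yields $\limsup_{t\to\infty}\norm{u(s+t)-z_s}\leq\delta(s)$. Next I would show that $(z_s)_{s\geq 0}$ is Cauchy: for $0\leq s_1\leq s_2$ and any $t\geq s_2-s_1$, writing $t':=t-(s_2-s_1)$ so that $u(s_1+t)=u(s_2+t')$, the triangle inequality
\[
\norm{z_{s_1}-z_{s_2}}\leq\norm{z_{s_1}-u(s_1+t)}+\norm{u(s_2+t')-z_{s_2}}
\]
and passage to the $\limsup$ in $t$ gives $\norm{z_{s_1}-z_{s_2}}\leq\delta(s_1)+\delta(s_2)$. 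Since $A^{-1}0$ is closed in $X$, it follows that $(z_s)$ converges to some $z^*\in A^{-1}0$.

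To conclude, for $r\geq s$ the decomposition
\[
\norm{u(r)-z^*}\leq\norm{u(s+(r-s))-S(r-s)u(s)}+\norm{S(r-s)u(s)-z_s}+\norm{z_s-z^*}
\]
combined with letting $r\to\infty$ for fixed $s$ yields $\limsup_{r\to\infty}\norm{u(r)-z^*}\leq\delta(s)+\norm{z_s-z^*}$, which vanishes as $s\to\infty$, so $u(t)\to z^*$ strongly. I expect the main conceptual hurdle to be the Cauchy argument for $(z_s)$, which requires choosing the right index shift so that the almost-orbit property can be exploited simultaneously at $s_1$ and $s_2$; once that is in place everything else reduces to Nevanlinna--Reich's asymptotic theorem, the closedness of $A^{-1}0$, and routine triangle-inequality bookkeeping.
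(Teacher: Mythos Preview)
Your proof is correct. Note, however, that the paper does not actually give its own proof of Theorem~\ref{thm:Xu}; it is quoted as a known result of Xu and cited from the original source. What the paper \emph{does} prove is the analogue for the second-order semigroup $\mathcal{S}_{1/2}$ (Theorem~\ref{thm:XuGen}), and there the route is genuinely different from yours.

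Your argument is direct and infinitary: you invoke Nevanlinna--Reich once per starting point $s$ to obtain a limit $z_s\in A^{-1}0$, show that $(z_s)$ is Cauchy via the almost-orbit estimate $\|z_{s_1}-z_{s_2}\|\le\delta(s_1)+\delta(s_2)$, and then squeeze $u(t)$ toward the common limit $z^*$. This is clean and uses the orbit-convergence theorem purely as a black box. The paper, by contrast, reaches the analogous conclusion through a finitary detour: it first establishes an explicit rate of metastability for the Cauchy property of $u$ (Theorem~\ref{thm:Xustylequantmeta}) by tracking the quantity $\|u(t)-Pu(t)\|$ rather than any limit point, combining the quantitative Poffald--Reich bound on $\|S_{1/2}(t)u(s)-PS_{1/2}(t)u(s)\|$ with the almost-orbit property and a modulus of uniform continuity for the projection $P$; only afterwards is metastability noneffectively unwound into convergence. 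Your approach is shorter and more transparent for the bare convergence statement, while the paper's approach is what is needed to produce the explicit computable rates that are its actual objective.
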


By combining the ideas of the quantitative analysis obtained in \cite{PP2022} of this result for the first-order case together with the quantitative version of the result of Poffald and Reich established in the first part of this paper, we here obtain a quantitative version of a result on almost-orbit convergence for the semigroup $\mathcal{S}_{1/2}$. This result, while finitary in nature, in particular also implies back the following ``infinitary" result for $\mathcal{S}_{1/2}$ which is similar to Xu's result above:

\begin{theorem}\label{thm:XuGen}
Let $X$ be uniformly convex and uniformly smooth with a strongly monotone duality map $J$ with value $M>0$, i.e. $\langle x-y,Jx-Jy\rangle\geq M\norm{x-y}^2$ for all $x,y \in X$. Let $A$ be m-accretive such that it satisfies the convergence condition and that $A^{-1}0\neq\emptyset$. Let $\mathcal{S}_{1/2}=\{S_{1/2}(t) : t\geq 0\}$ be the semigroup generated by $A_{1/2}$ via the exponential formula. Then every almost-orbit $u(t)$ of $\mathcal{S}_{1/2}$ converges strongly as $t\to\infty$.
\end{theorem}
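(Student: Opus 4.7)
The plan is to derive Theorem~\ref{thm:XuGen} by a Cauchy argument that reduces the convergence of the almost-orbit $u(t)$ to the convergence of a single genuine orbit of $\mathcal{S}_{1/2}$, to which Theorem~\ref{thm:PR} already applies. The key observation is that the almost-orbit condition is uniform in the time variable $t$: comparing $u(t+s)$ against $S_{1/2}(t)u(s)$ at one fixed (sufficiently large) $s$ but arbitrary $t\geq 0$ lets us transfer the asymptotic behavior of the honest semigroup orbit $S_{1/2}(\cdot)u(s)$ to the function $u$ itself.

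Concretely, given $\epsilon>0$, I would first use the almost-orbit definition to pick $s_0\geq 0$ with
\[
\sup\{\norm{u(t+s_0)-S_{1/2}(t)u(s_0)}\mid t\geq 0\}<\epsilon/3.
\]
Since $u$ takes values in $\overline{\mathrm{dom}A}$, Theorem~\ref{thm:PR} applies to $x=u(s_0)$, so $\{S_{1/2}(t)u(s_0)\}_{t\geq 0}$ converges strongly and is in particular Cauchy. Pick $T\geq 0$ with $\norm{S_{1/2}(t)u(s_0)-S_{1/2}(t')u(s_0)}<\epsilon/3$ for all $t,t'\geq T$. Then for $\sigma,\sigma'\geq s_0+T$, writing $\sigma=s_0+t$ and $\sigma'=s_0+t'$ with $t,t'\geq T$, the triangle inequality yields $\norm{u(\sigma)-u(\sigma')}<\epsilon$. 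Thus $u$ is Cauchy at infinity, and by completeness $u(t)$ converges strongly in $X$.

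At the qualitative level there is no serious obstacle: once Theorem~\ref{thm:PR} is available, the passage from Xu's Theorem~\ref{thm:Xu} to Theorem~\ref{thm:XuGen} is essentially a transcription, and the strong monotonicity constant $M$ of $J$ does not enter the $\epsilon/3$-argument at all. The substantive difficulty – and what this paper is actually set up to address – lies in producing the corresponding \emph{quantitative} statement, from which Theorem~\ref{thm:XuGen} is then obtained as a corollary. There the same two ingredients must be made effective and combined: the rate of convergence for $S_{1/2}(t)u(s_0)$ extracted from the quantitative version of Theorem~\ref{thm:PR} (together with a modulus of the convergence condition in the sense of \cite{PP2022}, which is where $M$ genuinely plays a role), and a rate of decay in $s$ for $\sup_{t\geq 0}\norm{u(t+s)-S_{1/2}(t)u(s)}$ quantifying the almost-orbit hypothesis.

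The main obstacle in this quantitative route, following the pattern of \cite{PP2022} for the first-order case, is to glue these two rates into a single explicit rate of Cauchy convergence for $u(t)$ depending only on the prescribed moduli, the constant $M$, and a bound on $\norm{u(s_0)-z}$ for some $z\in A^{-1}0$ – in particular, independently of the operator and the space beyond these data. Once this gluing is carried out, the above infinitary Cauchy argument is recovered in the limit and Theorem~\ref{thm:XuGen} follows directly.
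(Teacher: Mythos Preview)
Your argument is correct, and it is genuinely more direct than the paper's route. In the paper, Theorem~\ref{thm:XuGen} is obtained as a corollary of the quantitative metastability result Theorem~\ref{thm:Xustylequantmeta}: one first invokes Proposition~\ref{pro:CCchar} to get a modulus $\Omega$, notes that any almost-orbit has \emph{some} rate of metastability $\Phi$, feeds these (together with the remaining auxiliary data) into Theorem~\ref{thm:Xustylequantmeta} to obtain a metastability bound $\Gamma$ for $u$, and then argues by contradiction that metastability of $u$ implies the Cauchy property. Your $\varepsilon/3$ argument bypasses all of this: it uses only the qualitative Poffald--Reich result (Theorem~\ref{thm:PR}) applied at a single fixed base point $u(s_0)\in\overline{\mathrm{dom}A}$, together with the uniformity-in-$t$ built into the almost-orbit definition. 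For the bare infinitary statement this is both shorter and conceptually cleaner; what it does not give, and what the paper's detour is designed to produce, is the explicit rate of metastability (and, under a rate-of-convergence hypothesis on the almost-orbit, an explicit Cauchy rate) in terms of $\Omega$, $M$, $\Phi$ and the other moduli. You identify this trade-off accurately in the second half of your proposal.
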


This result on the behavior of almost-orbits in the case of $\mathcal{S}_{1/2}$ seems to be new to the literature and the approach taken here to establish it in particular exhibits the strength of quantitative analyses obtained in the proof mining program as these, exhibiting the real finitary core of a mathematical argument, sometimes allow for easy generalizations that lead to new results.\\

However, as common in applications of the proof mining program, the results given here are presented without any use of logical tools.

\section{Preliminaries}

As in the context of the work of Poffald and Reich \cite{PR1986}, we throughout consider a Banach space $(X, \norm{\cdot})$ with dual space $X^*$ which is uniformly convex, i.e.
\[
\forall \varepsilon\in (0,2]\exists \delta\in (0,1]\forall x,y\in X\left (\norm{x},\norm{y}\leq 1\land \norm{x-y}\geq\varepsilon\to \norm{\frac{x+y}{2}}\leq 1-\delta\right),
\]
and uniformly smooth, i.e.
\[
\forall \varepsilon>0\exists\delta>0\forall x,y\in X\left( \norm{x}=1\land \norm{y}\leq\delta\to \norm{x+y}+\norm{x-y}\leq 2+\varepsilon\norm{y}\right).
\]
Associated with $X$ is the normalized duality mapping $J:X\to X^*$ , defined by
\[
J(x):=\left\{ x^*\in X^*\mid \langle x,x^*\rangle=\norm{x}^2\text{ and }\norm{x^*}=\norm{x}\right\},
\]
for all $x\in X$ and this mapping is single-valued and uniformly continuous if, and only if, $X$ is uniformly smooth (see [8]). 

Further, we assume that we are given an accretive set-valued operator $A:X\to 2^X$, i.e.
\[
\forall (x_1,y_1),(x_2,y_2)\in A\left(\langle y_1-y_2,J(x_1-x_2)\rangle\geq 0\right),
\]
which is further m-accretive, i.e. $\mathrm{ran}(Id+\gamma A) = X$ for all $\gamma > 0$. We write $\mathrm{dom}(A) := \left\{x\in X \mid Ax\neq\emptyset\right\}$ for the domain and $\mathrm{ran}(A) :=\bigcup_{x\in X} Ax$ for the range of $A$.

Lastly, we henceforth write $P$ for the nearest point projection onto the set $A^{-1}0$ which will be assumed to be non-empty and therefore this projection is well-defined as the space is uniformly convex.

\section{The convergence condition from a quantitative perspective}

As discussed in the introduction, the central notion for the asymptotic results of Poffald and Reich is the notion of the convergence condition for the operator $A$ inducing the differential equation.

\begin{definition}[Nevanlinna and Reich \cite{NR1979}, generalizing Pazy \cite{Paz1978}]
Let $A$ be an m-accretive operator $A$ on a uniformly convex and uniformly smooth space $X$ with (single-valued) duality map $J$ and let $A^{-1}0\neq\emptyset$ and $P$ be the projection onto $A^{-1}0$. Then $A$ is said to satisfy the convergence condition if for all bounded sequences $(x_n,y_n)\subseteq A$ with
\[
\lim_{n\to\infty}\langle y_n,J(x_n-Px_n)\rangle=0,
\]
it holds that $\liminf_{n\to\infty}\norm{x_n-Px_n}=0$.
\end{definition}

In the analysis of Theorem \ref{thm:PR}, we will rely on a particular quantitative representation of that condition introduced in \cite{PP2022} based on logical considerations on different formal versions of the convergence condition (see \cite{PP2022} for details). Concretely, this quantitative representation is given by the following modulus:\footnote{These moduli are called \emph{full} moduli in \cite{PP2022}.}

\begin{definition}[\cite{PP2022}]
A \emph{modulus for the convergence condition} of an operator $A$ is a function $\Omega:\mathbb{N}\times\mathbb{N}\to\mathbb{N}$ satisfying that for all $k,K\in\mathbb{N}$ and all $x,y\in X$: if $y\in Ax$ and $\norm{x},\norm{y}\leq K$, then
\[
\langle y,J(x-Px)\rangle\leq\frac{1}{\Omega(k,K)+1}\Rightarrow\norm{x-Px}\leq\frac{1}{k+1}.
\]
\end{definition}

Clearly, any operator $A$ which posses such a modulus satisfies the convergence condition but, in and of itself, the requirement that $\Omega$ translates \emph{local errors of} $\langle y,J(x-Px)\rangle$ into \emph{local errors of} $\norm{x-Px}$ in such a uniform manner seems to be potentially stronger than that of the convergence condition. However, the following result was established in \cite{PP2022}:

\begin{proposition}[\cite{PP2022}]\label{pro:CCchar}
An operator $A$ satisfies the convergence condition if, and only if, it posses a modulus for the convergence condition.
\end{proposition}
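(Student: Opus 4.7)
The plan is to prove the two directions separately, with the backward implication being essentially immediate and the forward direction proceeding by contradiction via a quantifier negation argument that is standard in proof mining.

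For the backward direction, I would assume that $A$ admits a modulus $\Omega$ for the convergence condition and verify the convergence condition directly. Given any bounded sequence $(x_n,y_n)\subseteq A$ with $\langle y_n, J(x_n - Px_n)\rangle \to 0$, I would pick $K\in\mathbb{N}$ so that $\norm{x_n},\norm{y_n}\leq K$ for every $n$, and then, for any $k\in\mathbb{N}$, select $N$ large enough so that $\langle y_n, J(x_n-Px_n)\rangle \leq 1/(\Omega(k,K)+1)$ for all $n\geq N$. The defining property of $\Omega$ yields $\norm{x_n - Px_n}\leq 1/(k+1)$ for $n\geq N$, so in fact $\lim_{n\to\infty}\norm{x_n-Px_n}=0$, which is even stronger than the $\liminf$ required by the convergence condition.

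For the forward direction, I would argue by contraposition: assume no modulus for the convergence condition exists, and construct a bounded sequence violating the convergence condition. Negating the statement ``there exists $\Omega$ such that $\Omega(k,K)$ works for each pair $(k,K)$'' pointwise gives specific $k,K\in\mathbb{N}$ such that for every $N\in\mathbb{N}$ there exist $x_N,y_N\in X$ with $y_N\in Ax_N$, $\norm{x_N},\norm{y_N}\leq K$, $\langle y_N,J(x_N-Px_N)\rangle \leq 1/(N+1)$, and $\norm{x_N-Px_N}>1/(k+1)$. This sequence $(x_N,y_N)_N\subseteq A$ is bounded by $K$ and satisfies $\langle y_N,J(x_N-Px_N)\rangle\to 0$, while $\liminf_{N\to\infty}\norm{x_N-Px_N}\geq 1/(k+1)>0$, contradicting the convergence condition.

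The ``hard part'' is not really hard at all: it is just the careful pointwise negation of the $\forall\exists$-statement defining the modulus combined with countable choice to extract the bad sequence $(x_N,y_N)_N$. The only genuine content is recognizing that the uniformity encoded in $\Omega$ (a single bound depending only on the data $(k,K)$) is precisely what comes out of negating the convergence condition in contrapositive form, so that no ``nonstandard'' additional strength is introduced. The rest is bookkeeping on quantifiers.
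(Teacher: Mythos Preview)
Your argument is correct and is the standard one. Note that the paper itself does not include a proof of this proposition; it is stated with a citation to \cite{PP2022} and used as a black box. So there is no proof in the present paper to compare against, but your proposal matches the expected elementary argument: the backward direction is immediate, and the forward direction follows by observing that the existence of a modulus is equivalent (via a pointwise definition of $\Omega(k,K)$) to the $\forall k,K\,\exists N$ form, whose negation yields the witnessing bounded sequence. One small remark: in the backward direction you may want to observe that $\langle y_n,J(x_n-Px_n)\rangle\geq 0$ by accretivity (since $0\in A(Px_n)$), so that convergence to $0$ indeed eventually forces the inner product below any positive threshold; you implicitly use this when you write the one-sided inequality.
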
 

So: the convergence condition, although formulated via sequences, is indeed a uniform transformation of local errors in the sense of the previous..\\

Regarding further explorations of the naturalness of such moduli, we refer to \cite{PP2022} for various explicit constructions of such moduli for certain classes of operators which do satisfy the convergence condition (in particular covering strongly accretive operators and operators which are uniformly accretive at zero in the sense of \cite{KKA2015}).\\

Before we move to the quantitative treatment of Theorem \ref{thm:PR}, we collect some pointers for interesting logical features of the above modulus in the following remark.

\begin{remark}[For logicians]\label{rem:logic}
The above modulus is the natural quantitative reformulation of the convergence condition as guided by the monotone functional interpretation together with the negative translation and in particular as established in \cite{PP2022}, general logical metatheorems established in \cite{Pis2022b,Pis2022} (relying on the previous seminal works \cite{GeK2008,Koh2005}) guarantee both 
\begin{enumerate}
\item the extractability of a computable full modulus for the convergence condition from a wide range of noneffective proofs of the already much weaker property
\[
\forall (x,y)\in A\left( \langle y,J(x-Px)\rangle=0\to \norm{x-Px}=0\right),
\]
\item that from a noneffective proof using the convergence condition as a premise, a transformation can be extracted that transforms a full modulus into quantitative information on the conclusion.
\end{enumerate}
We refer to \cite{PP2022} for sketches of proofs for these and further logical remarks.
\end{remark}

\section{An analysis of Poffald's and Reich's result}

To derive a quantitative version of the convergence result contained in Theorem \ref{thm:PR}, depending on a modulus for the convergence condition, we first have to extract from the proof of \cite{PR1986} explicit quantitative bounds on the norms of the orbits and their derivatives involved.\\

For that, we follow the way a solution for the associated system ($\dagger$) is constructed in \cite{PR1986} (which differs in comparison to the construction of Barbu \cite{Bar1972} (see also \cite{Bre1972}) who considered this problem in the context of Hilbert spaces before Poffald and Reich). To solve ($\dagger$), Poffald and Reich first solve the system
\[
\begin{cases}
u''(t)\in Au(t)+pu(t),\; 0<t<\infty,\\
u(0)=x,\\
u\in L^2(0,\infty;X).
\end{cases}\tag*{$(\dagger)_p$}
\]
for $p\to 0^+$ which in turn is solved by studying the approximate system
\[
\begin{cases}
u''(t)=A_ru(t)+pu(t),\; 0<t<\infty,\\
u(0)=x,\\
u\in L^2(0,\infty;X).
\end{cases}\tag*{$(\dagger)^r_p$}
\]
for $r\to 0^+$ where $A_r$ is the Yosida approximate.\\

In the latter case, they conclude that the unique solution $u^r_p$ of $(\dagger)_p^r$ converges in $L^2(0,\infty;X)$ and $C([0,\infty);X)$ to a (unique) solution $u_p$ of $(\dagger)_p$. For the approximate solutions $u^r_p$, the following bounds on $u^r_p$ and its derivatives are obtained:
\begin{itemize}
\item $\norm{u^r_p(t)}\leq\norm{x}$ for all $t\geq 0$ (p. 521, (2.7));
\item $\int_0^\infty\norm{{u^r_p}'(t)}^2\,\mathrm{dt}\leq 2/M^2(d(0,Ax)+p\norm{x})^{3/2}\norm{x}^{1/2}$ (p. 522, (2.14));
\item $\int_0^\infty\norm{{u^r_p}''(t)}^2\,\mathrm{dt}\leq 2/M^2(d(0,Ax)+p\norm{x})^{1/2}\norm{x}^{3/2}$ (p. 522, (2.17)).
\end{itemize}
As remarked in \cite{PR1986}, these bounds immediately transfer to the solution $u_p$ of $(\dagger)_p$ by applying Lemma 2.6 of \cite{PR1986} to $u^r_p\to u_p$ for $r\to 0^+$.\\

Following \cite{PR1986}, these bounds can then be used to establish bound on the respective norms of a solution $u$ to $(\dagger)$ by applying Lemma 2.7 of \cite{PR1986} to the convergence $u_p\to u$ for $p\to 0^+$ which immediately yields the following bounds for the solution of $(\dagger)$ corresponding to the initial value $x$:
\begin{itemize}
\item $\norm{u(t)}\leq\norm{x}$ for all $t\geq 0$;
\item $\int_0^\infty\norm{u'(t)}^2\,\mathrm{dt}\leq 2/M^2d(0,Ax)^{3/2}\norm{x}^{1/2}$;
\item $\int_0^\infty\norm{u''(t)}^2\,\mathrm{dt}\leq 2/M^2d(0,Ax)^{1/2}\norm{x}^{3/2}$.
\end{itemize}

Besides of these bounds, we will make use of the following lemma:
\begin{lemma}[folklore, see \cite{PP2022}]\label{lem:integralLimInf}
If $f:[0,\infty)\to[0,\infty)$ is Lebesgue integrable with
\[
\int^{\infty}_0f(t)\,\mathrm{dt}\leq L,
\]
then for any Lebesgue null set $N\subseteq [0,\infty)$ and any $k,n$:
\[
\exists t\in [n,(L+1)(k+1)+n]\setminus N\left(f(t)\leq\frac{1}{k+1}\right).
\]
\end{lemma}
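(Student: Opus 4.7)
The plan is to prove the contrapositive by a direct pigeonhole on the integral. Suppose, towards a contradiction, that the claimed $t$ fails to exist, i.e., that for some null set $N$ and some $k,n$ one has $f(t) > 1/(k+1)$ for every $t\in[n,n+(L+1)(k+1)]\setminus N$. Since $N$ has Lebesgue measure zero, this strict inequality holds almost everywhere on the closed interval $[n,n+(L+1)(k+1)]$, whose length is exactly $(L+1)(k+1)$.

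Integrating this pointwise lower bound over the interval then yields
\[
\int_n^{n+(L+1)(k+1)} f(t)\,\mathrm{dt} \;\geq\; \frac{1}{k+1}\cdot (L+1)(k+1) \;=\; L+1.
\]
On the other hand, since $f\geq 0$ on $[0,\infty)$ and the total integral is bounded by $L$, the integral over any subinterval of $[0,\infty)$ is itself bounded by $L$. Combining these two bounds gives $L+1 \leq L$, a contradiction.

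There is essentially no substantive obstacle in this argument; the only point worth noting is that the null set $N$ is absorbed for free by the insensitivity of the Lebesgue integral to modification on null sets, and the choice of interval length $(L+1)(k+1)$ is precisely calibrated so that the lower bound $L+1$ obtained from pigeonhole exceeds the global upper bound $L$ by exactly $1$, giving the required contradiction with a safe margin.
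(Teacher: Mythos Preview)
Your argument is correct; this is exactly the standard pigeonhole/contrapositive proof of this folklore lemma. The paper does not supply its own proof (it simply cites the lemma as folklore from \cite{PP2022}), so there is nothing further to compare.
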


The quantitative version of Theorem \ref{thm:PR} now takes the following form for the case of $x\in\mathrm{dom}A$.

\begin{theorem}\label{thm:PRquant}
Let $X$ be uniformly convex and uniformly smooth with a strongly monotone duality map $J$ with value $M>0$, i.e. $\langle x-y,Jx-Jy\rangle\geq M\norm{x-y}^2$ for all $x,y \in X$. Let $A$ be $m$-accretive with $A^{-1}0\neq\emptyset$ with $p\in A^{-1}0$ and such that it satisfies the convergence condition with a modulus for the convergence condition $\Omega$. Let $\mathcal{S}_{1/2}=\{S_{1/2}(t)\mid t\geq 0\}$ be the semigroup generated by $A_{1/2}$ via the exponential formula. For any $x\in\mathrm{dom}A$, we have
\[
\forall k\in\mathbb{N}\forall t,t'\geq\chi((\Omega(2k+1,\max\{1,b\})+1)^2\remin 1)\left(\norm{S_{1/2}(t)x-S_{1/2}(t')x}\leq \frac{1}{k+1}\right)
\]
with $\chi(k)=(D+1)(k+1)$ and where 
\[
D\geq(1+b^2)\frac{2}{M^2}d(0,Ax)^{1/2}b^{3/2}
\]
as well as $b\geq\norm{x-Px},\norm{x}$.
\end{theorem}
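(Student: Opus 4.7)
My plan is to make the proof of Theorem~\ref{thm:PR} explicit in terms of a rate of convergence, using the modulus $\Omega$ as a quantitative replacement for the convergence condition and tracking the constants entering from the explicit $L^2$-bound on $u''$ collected above. Three ingredients combine to produce the rate: the integral bound on $\int_0^\infty \|u''(t)\|^2\,\mathrm{d}t$, Lemma~\ref{lem:integralLimInf} to extract a time at which $\|u''(t_0)\|$ is small, and nonexpansivity of $\mathcal{S}_{1/2}$ at a fixed point in $A^{-1}0$ to propagate smallness at $t_0$ to smallness of the Cauchy differences for all later times.

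Concretely, I would set $u(t) := S_{1/2}(t)x$, which by $x\in\mathrm{dom}A$ is a solution of $(\dagger)$ a.e., so in particular $u''(t)\in Au(t)$ outside of a Lebesgue null set $N$, and $\norm{u(t)}\leq\norm{x}\leq b$, together with
\[
\int_0^\infty \norm{u''(t)}^2\,\mathrm{d}t \leq \frac{2}{M^2} d(0,Ax)^{1/2} b^{3/2} =: L.
\]
Writing $K:=\max\{1,b\}$ and $m:=\Omega(2k+1, K)+1$, I would apply Lemma~\ref{lem:integralLimInf} to $f(t)=\norm{u''(t)}^2$ with exception set $N$, parameter $n=0$, and a suitable precision $k'$ (of order $K^2 m^2$) to obtain $t_0\in [0,(L+1)(k'+1)]$ with $u''(t_0)\in Au(t_0)$ and $\norm{u''(t_0)}^2\leq 1/(k'+1)$, chosen so that $\norm{u''(t_0)}$ is bounded both by $K$ and by $1/(bm)$.

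With $t_0$ in hand, Cauchy--Schwarz yields
\[
|\langle u''(t_0), J(u(t_0)-Pu(t_0))\rangle| \leq \norm{u''(t_0)}\cdot\norm{u(t_0)-Pu(t_0)} \leq \norm{u''(t_0)}\cdot b \leq \frac{1}{m},
\]
where $\norm{u(t_0)-Pu(t_0)}\leq b$ follows from nonexpansivity of $S_{1/2}(t_0)$ combined with $Px\in A^{-1}0\subseteq A_{1/2}^{-1}0$ being a fixed point of the semigroup. Since $(u(t_0),u''(t_0))\in A$ with both norms bounded by $K$, the defining property of $\Omega$ then gives $\norm{u(t_0)-Pu(t_0)}\leq 1/(2k+2)$. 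Finally, since $Pu(t_0)\in A^{-1}0$ is itself a fixed point of $S_{1/2}$, nonexpansivity propagates the estimate to $\norm{u(t)-Pu(t_0)}\leq 1/(2k+2)$ for all $t\geq t_0$, whence the triangle inequality forces $\norm{u(t)-u(t')}\leq 1/(k+1)$ for $t,t'\geq t_0$.

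The main technical obstacle will be the bookkeeping that verifies $t_0\leq\chi(m^2-1)=(D+1)m^2$ for the chosen $k'$, which amounts to an estimate of the form $(L+1)K^2\leq D+1$; this is where the factor $(1+b^2)$ in the hypothesis $D\geq(1+b^2)(2/M^2)d(0,Ax)^{1/2}b^{3/2}$ is used, together with $K^2\leq 1+b^2$, to absorb the discrepancy between $b$ and $\max\{1,b\}$. The secondary subtleties --- that the modulus can be invoked at $t_0$ because $u''(t_0)\in Au(t_0)$ despite the a.e.\ condition, and because $\norm{u''(t_0)}\leq K$ --- are handled by the flexibility built into Lemma~\ref{lem:integralLimInf} (to avoid $N$) and by the choice $K\geq 1$.
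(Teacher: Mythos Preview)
Your strategy is sound and mirrors the paper closely, but the bookkeeping step you flag as ``the main technical obstacle'' does not go through as written, and this is exactly where your route diverges from the paper's. You apply Lemma~\ref{lem:integralLimInf} only to $f(t)=\norm{u''(t)}^2$ with integral bound $L=\tfrac{2}{M^2}d(0,Ax)^{1/2}b^{3/2}$, then recover the inner-product bound via pointwise Cauchy--Schwarz. To get $\norm{u''(t_0)}\cdot b\leq 1/m$ you need precision $k'+1\geq b^2 m^2$, hence $t_0\leq (L+1)b^2 m^2$, and the inequality you need is $(L+1)K^2\leq D+1$. But from $K^2\leq 1+b^2$ and $D\geq (1+b^2)L$ you only get $(L+1)K^2\leq (1+b^2)L+(1+b^2)=D+(1+b^2)$, not $D+1$; for small $L$ and large $b$ (e.g.\ $d(0,Ax)$ tiny, $b=10$) the claimed estimate fails by a wide margin. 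So your argument proves convergence with \emph{some} explicit rate, but not the rate $\chi((\Omega(2k+1,\max\{1,b\})+1)^2\remin 1)=(D+1)m^2$ asserted in the theorem.

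The paper avoids this by not using pointwise Cauchy--Schwarz at all. Instead it bounds the second integral directly,
\[
\int_0^\infty \langle u''(t),J(u(t)-Pu(t))\rangle^2\,\mathrm{dt}\leq \norm{x-Px}^2\int_0^\infty\norm{u''(t)}^2\,\mathrm{dt}\leq b^2 L,
\]
so that $\int_0^\infty\big(\norm{u''(t)}^2+\langle u''(t),J(u(t)-Pu(t))\rangle^2\big)\,\mathrm{dt}\leq (1+b^2)L\leq D$, and applies Lemma~\ref{lem:integralLimInf} once to this sum with bound $D$ and precision $m^2-1$. This yields $t_0\leq (D+1)m^2$ with both $\norm{u''(t_0)}\leq 1/m$ and $\langle u''(t_0),J(u(t_0)-Pu(t_0))\rangle\leq 1/m$ simultaneously, and since $1/m\leq 1\leq \max\{1,b\}$ the modulus $\Omega$ applies. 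This is precisely why the factor $(1+b^2)$ sits inside the definition of $D$: it absorbs the second integral, not the precision parameter in the lemma. Your final steps (propagation via nonexpansivity at the fixed point $Pu(t_0)$ and the triangle inequality) are fine and equivalent to the paper's use of the monotonicity of $t\mapsto\norm{u(t)-Pu(t)}$ together with $\norm{u(t+h)-u(t)}\leq 2\norm{u(t)-Pu(t)}$.
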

\begin{proof}
We write $u(t)=S_{1/2}(t)x$. Then $u''$ exists almost everywhere, say on $[0,\infty)$. As outline in the discussion before Lemma \ref{lem:integralLimInf}, we have $\norm{u(t)}\leq\norm{x}$ for all $t\geq 0$ as well as
\[
\int_0^\infty\norm{u''(t)}^2\,\mathrm{dt}\leq\frac{2}{M^2}d(0,Ax)^{1/2}\norm{x}^{3/2}.
\]
Now, using the defining property of the projection $P$, we have
\[
\norm{u(t+h)-Pu(t+h)}\leq\norm{u(t+h)-Pu(t)}\leq\norm{u(t)-Pu(t)}
\]
which in particular implies that
\begin{align*}
\int_0^\infty\langle u''(t),J(u(t)-Pu(t))\rangle^2\,\mathrm{dt}&\leq\int_0^\infty\norm{u''(t)}^2\norm{J(u(t)-Pu(t))}^2\,\mathrm{dt}\\
&\leq\int_0^\infty\norm{u''(t)}^2\norm{J(u(0)-Pu(0))}^2\,\mathrm{dt}\\
&\leq \frac{2}{M^2}d(0,Ax)^{1/2}\norm{x}^{3/2}\norm{x-Px}^2.
\end{align*}
Therefore also
\begin{align*}
&\int_0^\infty\left(\norm{u''(t)}^2+\langle u''(t),J(u(t)-Pu(t))\rangle^2\right)\,\mathrm{dt}\\
&\qquad\qquad\qquad\leq (1+\norm{x-Px}^2)\frac{2}{M^2}d(0,Ax)^{1/2}\norm{x}^{3/2}\\
&\qquad\qquad\qquad\leq D.
\end{align*}
Lemma \ref{lem:integralLimInf} now implies that for any $k\in\mathbb{N}$:
\[
\exists t\in [0,\chi(k)]\setminus N\left(\max\left\{\norm{u''(t)}^2,\langle u''(t),J(u(t)-Pu(t))\rangle^2\right\}\leq\frac{1}{k+1}\right).
\]
Thus in particular, we have
\[
\exists t\in [0,\chi((k+1)^2\remin 1)]\setminus N\left(\max\left\{\norm{u''(t)},\langle u''(t),J(u(t)-Pu(t))\rangle\right\}\leq\frac{1}{k+1}\right)
\]
which yields
\begin{gather*}
\exists t\leq\chi((\Omega(k,\max\{1,b\})+1)^2\remin 1)\\\left(\max\left\{\norm{u''(t)},\langle u''(t),J(u(t)-Pu(t))\rangle\right\}\leq\frac{1}{\Omega(k,\max\{1,b\})+1}\right)
\end{gather*}
and thus, as $\norm{u''(t)}\leq 1$ for such a $t$, the properties of $\Omega$ yield that
\[
\exists t\leq\chi((\Omega(k,\max\{1,b\})+1)^2\remin 1)\left(\norm{u(t)-Pu(t)}\leq\frac{1}{k+1}\right).
\]
But as discussed above, $\norm{u(t)-Pu(t)}$ is decreasing and thus actually
\[
\forall t\geq\chi((\Omega(k,\max\{1,b\})+1)^2\remin 1)\left(\norm{u(t)-Pu(t)}\leq\frac{1}{k+1}\right).
\]
As in \cite{PR1986}, we can now show
\[
\norm{u(t+h)-u(t)}\leq 2\norm{u(t)-Pu(t)}
\]
and thus we obtain
\[
\forall t\geq\chi((\Omega(2k+1,\max\{1,b\})+1)^2\remin 1)\forall h\left(\norm{u(t+h)-u(t)}\leq\frac{1}{k+1}\right)
\]
which is the claim.
\end{proof}

By continuity of $S_{1/2}$, the result for $x\in\mathrm{dom}A$ extends to $x\in\overline{\mathrm{dom}A}$ and by an analysis of this proof, we obtain the following quantitative result for the extension.

\begin{theorem}
Assume the conditions of Theorem \ref{thm:PRquant}. Let $x\in\overline{\mathrm{dom}A}$ with $f:\mathbb{N}\to\mathbb{N}$ be such that $f$ is nondecreasing and 
\[
\forall k\in\mathbb{N}\exists z,y\in X\left (z\in Ay\land \norm{y},\norm{z}\leq f(k)\land \norm{x-y}\leq\frac{1}{k+1}\right).
\]
Then
\begin{gather*}
\forall k\in\mathbb{N}\forall t,t'\geq \chi^+_k((\Omega(6k+5,\max\{1,f(3k+2)\})+1)^2\remin 1)\\
\left(\norm{S_{1/2}(t)x-S_{1/2}(t')x}\leq\frac{1}{k+1} \right)
\end{gather*}
with $\chi_k(k)=(D_k+1)(k+1)$ and where 
\[
D_k\geq(1+b_k^2)\frac{2}{M^2}f(3k+2)^{2}
\]
as well as $b_k\geq \norm{x-Px}+\norm{x}+f(3k+2)$.
\end{theorem}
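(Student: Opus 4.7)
The plan is to reduce the case $x\in\overline{\mathrm{dom}A}$ back to the case $x\in\mathrm{dom}A$ already handled by Theorem \ref{thm:PRquant}, using the data $f$ to produce a suitable approximant together with the nonexpansivity of the semigroup $S_{1/2}$. Concretely, for each $k$ I would use the premise on $f$ to pick $y\in X$ with $z\in Ay$, $\norm{y},\norm{z}\leq f(3k+2)$ and $\norm{x-y}\leq\frac{1}{3k+3}$. Since $A_{1/2}$ is m-accretive, the semigroup $S_{1/2}$ is nonexpansive on $\overline{\mathrm{dom}A}$, so a threefold triangle-inequality estimate gives
\[
\norm{S_{1/2}(t)x-S_{1/2}(t')x}\leq 2\norm{x-y}+\norm{S_{1/2}(t)y-S_{1/2}(t')y}\leq\tfrac{2}{3k+3}+\norm{S_{1/2}(t)y-S_{1/2}(t')y}.
\]
It therefore suffices to ensure $\norm{S_{1/2}(t)y-S_{1/2}(t')y}\leq\frac{1}{3k+3}$, which is exactly what Theorem \ref{thm:PRquant} delivers if we invoke it with $y$ in place of $x$ and with error parameter $3k+2$ in place of $k$ (so that $2(3k+2)+1=6k+5$ appears in the $\Omega$-call).

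The remaining task is to verify that $b_k$ and $D_k$ as stated are admissible choices of the data $b$ and $D$ of Theorem \ref{thm:PRquant} applied to $y$. For $b$: we need $b_k\geq\max\{\norm{y},\norm{y-Py}\}$; the first is immediate from $\norm{y}\leq f(3k+2)$, and for the second I would write $\norm{y-Py}\leq\norm{y-Px}\leq\norm{y-x}+\norm{x-Px}\leq(\norm{y}+\norm{x})+\norm{x-Px}\leq f(3k+2)+\norm{x}+\norm{x-Px}$. For $D$: one needs $D_k\geq(1+b_k^2)\frac{2}{M^2}d(0,Ay)^{1/2}\norm{y}^{3/2}$, and since $d(0,Ay)\leq\norm{z}\leq f(3k+2)$ and $\norm{y}\leq f(3k+2)$, the factor $d(0,Ay)^{1/2}\norm{y}^{3/2}$ is bounded by $f(3k+2)^{2}$, matching the stated $D_k$. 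Finally, the $K$-argument $\max\{1,f(3k+2)\}$ to $\Omega$ suffices because $\Omega$ is only applied to pairs $(u(t),u''(t))$ with $u(t)=S_{1/2}(t)y$ satisfying $\norm{u(t)}\leq\norm{y}\leq f(3k+2)$ and $\norm{u''(t)}\leq 1$ (as arranged in the proof of Theorem \ref{thm:PRquant}).

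Putting these pieces together, Theorem \ref{thm:PRquant} applied to $y$ with parameter $3k+2$ yields
\[
\norm{S_{1/2}(t)y-S_{1/2}(t')y}\leq\tfrac{1}{3k+3}\quad\text{for all }t,t'\geq\chi_k^+\bigl((\Omega(6k+5,\max\{1,f(3k+2)\})+1)^2\remin 1\bigr),
\]
and combining with the triangle inequality above gives $\norm{S_{1/2}(t)x-S_{1/2}(t')x}\leq\frac{3}{3k+3}=\frac{1}{k+1}$, which is the claim. The calculations themselves are routine once the approximant is in place; the only real bookkeeping step is the splitting of the error $\frac{1}{k+1}$ into three equal parts so that a single application of $f$ at argument $3k+2$ controls both the approximation term $2\norm{x-y}$ and the parameter used inside Theorem \ref{thm:PRquant}. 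This is what I expect to be the main (but quite mild) obstacle, and it is exactly what dictates the appearance of $3k+2$ and $6k+5$ in the final rate.
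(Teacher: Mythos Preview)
Your proposal is correct and follows essentially the same route as the paper: pick an approximant $y\in\mathrm{dom}A$ via $f$ at level $3k+2$, use nonexpansivity of $S_{1/2}(t)$ for the triangle-inequality split $\norm{S_{1/2}(t)x-S_{1/2}(t')x}\leq 2\norm{x-y}+\norm{S_{1/2}(t)y-S_{1/2}(t')y}$, and then invoke Theorem~\ref{thm:PRquant} for $y$ with error parameter $3k+2$. Your verification of the admissibility of $b_k$ and $D_k$ is in fact more explicit than the paper's (which only checks $\norm{y-Py}\leq b_k$ via the 1-Lipschitz property of $v\mapsto d(v,A^{-1}0)$ and leaves $D_k$ implicit); your route to the same bound via $\norm{y-Py}\leq\norm{y-Px}$ is an equally valid minor variant.
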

\begin{proof}
Using that $S_{1/2}(t)$ is a contraction for every $t$, we get that there exists $z\in Ay$ such that $\norm{z},\norm{y}\leq f(3k+2)$ and $\norm{x-y}\leq 1/(3k+3)$. Therefore
\begin{align*}
\norm{S_{1/2}(t)x-S_{1/2}(t')x}&\leq\norm{S_{1/2}(t)x-S_{1/2}(t')y}+\norm{S_{1/2}(t)y-S_{1/2}(t')y}\\
&\qquad\qquad\qquad+\norm{S_{1/2}(t')x-S_{1/2}(t')y}\\
&\leq 2\norm{x-y} + \norm{S_{1/2}(t)y-S_{1/2}(t')y}\\
&\leq \frac{2}{3(k+1)} + \norm{S_{1/2}(t)y-S_{1/2}(t')y}.
\end{align*}
Using the previous Theorem \ref{thm:PRquant}, we get that 
\begin{gather*}
\forall k\in\mathbb{N}\forall t,t'\geq\chi_k((\Omega(6k+5,\max\{1,f(3k+2)\})+1)^2\remin 1)\\\left(\norm{S_{1/2}(t)y-S_{1/2}(t')y}\leq \frac{1}{3k+3}\right)
\end{gather*}
since
\begin{align*}
\norm{y-Py}&\leq\norm{x-Px}+\left\vert\norm{y-Py}-\norm{x-Px}\right\vert\\
&\leq\norm{x-Px}+\norm{y-x}\\
&\leq\norm{x-Px}+\norm{x}+f(3k+2)
\end{align*}
and thus $\norm{y-Py}\leq b_k$. This gives the claim.
\end{proof}

\section{A generalization to almost-orbits}

We are now concerned with establishing Theorem \ref{thm:XuGen}. As discussed in the introduction, we arrive at this generalization of Xu's result by means of an initial quantitative result and we arrive at this quantitative result in turn by utilizing the above quantitative version of Poffald's and Reich's result together with the ideas employed in the analysis of Xu's result given in \cite{PP2022}.\\

In that context, the (logically speaking) complicated premise of $u$ being an almost-orbit induces two natural quantitative versions of that property which were introduced in \cite{KKA2015} and also feature in the finitary variants of Xu's result given in \cite{PP2022}. Concretely, in the following, we will obtain (similar to \cite{KKA2015,PP2022}) two translations converting respectively
\begin{enumerate}
\item a rate of metastability $\Phi$ of the almost-orbit as introduced in \cite{KKA2015}, i.e. $\Phi$ satisfies
\[
\forall k\in\N\ \forall f:\N\to\N\ \exists n\leq \Phi(k,f)\ \forall t\in [0,f(n)] \left( \|S_{1/2}(t)u(n)-u(t+n)\|\leq \frac{1}{k+1} \right),
\]
into a rate of metastability $\Gamma$ for the Cauchy property of the almost-orbit, i.e. $\Gamma$ satisfies
\[
\forall k\in\N\ \forall f:\N\to \N\ \exists n\leq \Gamma(k, f)\ \forall t,t'\in [n, n+f(n)] \left( \|u(t)-u(t')\|\leq \frac{1}{k+1} \right),
\]
\item a rate of convergence $\Phi$ for the almost-orbit, i.e. $\Phi$ satisfies
\[
\forall k\in\mathbb{N}\ \forall s\geq \Phi(k)\left( \sup_{t\geq 0}\norm{u(s+t)-S_{1/2}(t)u(s)}\leq\frac{1}{k+1}\right).
\]
into a rate of Cauchyness of the almost-orbit of the Cauchy problem in a similar manner as before.
\end{enumerate}

The need for metastability in the context of quantitative results on convergence statements in general, and in proof mining in particular, arises from fundamental results from recursion theory due to Specker \cite{Spe1949} whereas even computable monotone sequences of rational numbers in $[0,1]$ do not have a computable rate of convergence. By generalizing such examples, one can see that also in general a rate of convergence for an almost-orbit will not be computable (see \cite{KKA2015}). Even if computable rates of convergence are in general unattainable, one can, in very general situations, provide effective rates of so-called metastability (which also has been recognized as an important finitary version of the Cauchy property from a non-logical perspective by Tao, see e.g. \cite{Tao2008b,Tao2008a}) which are, moreover, highly uniform. In particular, metastability is (noneffectively) equivalent to the Cauchy property. We refer to \cite{PP2022} as well as \cite{KKA2015} for further (logical) discussions on these different quantitative versions of the almost-orbit property.\\

We now begin with the metastable version of the generalization of which both Theorem \ref{thm:XuGen} and a second quantitative result on rates of convergence will be corollaries:

\begin{theorem}\label{thm:Xustylequantmeta}
Let $X$ be uniformly convex and uniformly smooth with a strongly monotone duality map $J$ with value $M>0$, i.e. $\langle x-y,Jx-Jy\rangle\geq M\norm{x-y}^2$ for all $x,y \in X$. Let $A$ be m-accretive such that it satisfies the convergence condition with a modulus for the convergence condition $\Omega$. Let $\mathcal{S}_{1/2}=\{S_{1/2}(t) : t\geq 0\}$ be the semigroup generated by $A_{1/2}$ via the exponential formula. Let $A^{-1}0\neq\emptyset$ with $p\in A^{-1}0$ and assume that $P$, the nearest point projection onto $A^{-1}0$, is uniformly continuous on bounded subsets of $X$ with a modulus $\omega:\mathbb{N}^2\to\mathbb{N}$ such that
\[
\forall r,k\in\N\ \forall x,y\in B_r(p) \left( \|x-y\|\leq \frac{1}{\omega(r,k)+1} \to \|Px-Py\|\leq \frac{1}{k+1} \right),
\]
and, without loss of generality, assume that $\omega(r,k)\geq k$ for all $r,k\in\mathbb{N}$. Let $u$ be an almost-orbit of $\mathcal{S}_{1/2}$ with a rate of metastability $\Phi$ on the almost-orbit condition, i.e.
\[
\forall k\in\N\ \forall f:\N\to\N\ \exists n\leq \Phi(k,f)\ \forall t\in [0,f(n)] \left( \|S_{1/2}(t)u(n)-u(t+n)\|\leq \frac{1}{k+1} \right).
\]
Let $B\in\mathbb{N}^*$ be such that $\|u(t)-p\|\leq B$ for all $t\geq 0$ and let $f_s:\mathbb{N}\to\mathbb{N}$ for $s\geq 0$ be such that $f_s$ is nondecreasing and
\begin{align*}
&\forall n\in\mathbb{N}\ \exists x_{s,n},y_{s,n}\in X\bigg(y_{s,n}\in Ax_{s,n}\\
&\qquad\qquad\qquad\land \norm{x_{s,n}},\norm{y_{s,n}}\leq f_s(n)\land \norm{x_{s,n}-u(s)}\leq\frac{1}{n+1}\bigg).
\end{align*}
Then we have
\[
\forall k\in\N\ \forall f:\N\to \N\ \exists n\leq \Gamma(k, f)\ \forall t,t'\in [n, n+f(n)] \left( \|u(t)-u(t')\|\leq \frac{1}{k+1} \right),
\]
where
\[
\Gamma(k, f):=\max\{\Gamma'(8k+7,j_{k,f}),\Phi(8k+7, h_{N,f})\mid N\leq \Gamma'(8k+7, j_{k,f})\}
\]
with
\begin{gather*}
h_{N,f}(n):=f(\max\{N,n \}) + \max\{N,n\} - n,\\
j_{k,f}(n):=\max\{n,\Phi(8k+7,h_{n,f})\}-n\\
g_{k,f}(m):=\Omega_{m}(3k+2) + f(m+\Omega_{m}(3k+2)),\\
\Gamma'(k,f):= \Phi(\omega(B, 3k+2), g_{k,f})+\max\{\Omega_{m}(3k+2)\mid m\leq\Phi(\omega(B, 3k+2), g_{k,f})\},
\end{gather*}
for $\Omega_s(k)$ with $s\geq 0$ defined by
\[
\Omega_s(k):=\chi_{s,k}((\Omega(3k+2,\max\{1,f_s(\omega(B+1, 3k+2))\})+1)^2\remin 1)
\]
with $\chi_{s,k}(k):=(D_{s,k}+1)(k+1)$ and where 
\[
D_{s,k}\geq(1+(B+1)^2)\frac{2}{M^2}f_s(\omega(B+1, 3k+2))^2.
\]
\end{theorem}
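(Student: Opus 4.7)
The overall strategy is to control the Cauchy difference $\|u(t)-u(t')\|$ for $t,t' \in [n, n+f(n)]$ by a triangle inequality that introduces an intermediate anchor time $m \leq n$:
\[
\|u(t)-u(t')\| \leq \|u(t) - S_{1/2}(t-m)u(m)\| + \|S_{1/2}(t-m)u(m) - S_{1/2}(t'-m)u(m)\| + \|u(t') - S_{1/2}(t'-m)u(m)\|.
\]
The two outer summands are instances of the almost-orbit deviation, to be controlled by the metastability rate $\Phi$; the middle summand is a Cauchy-type quantity for the $\mathcal{S}_{1/2}$-orbit starting at $u(m)$, which is handled by the extended version of Theorem \ref{thm:PRquant} applied to $u(m) \in \overline{\mathrm{dom}A}$ together with the approximating data $f_m$.

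The quantity $\Omega_s(k)$ is designed to be exactly the stabilization time produced by the $\overline{\mathrm{dom}A}$-version of Theorem \ref{thm:PRquant} for the initial value $u(s)$, using $f_s$ as the density datum and $B+1$ as the norm bound (since $\|u(s)-p\|\leq B$ and the approximant $x_{s,n}$ is within distance at most $1$ of $u(s)$). Accordingly, for $t-m, t'-m \geq \Omega_m(3k+2)$, the middle term of the triangle inequality falls below $1/(3k+3)$. To locate a good anchor $m$, I apply $\Phi$ to the function $g_{k,f}(m) = \Omega_m(3k+2) + f(m+\Omega_m(3k+2))$: this guarantees an index $m$ such that the almost-orbit bound holds on $[m,m+g_{k,f}(m)]$ with accuracy $1/(\omega(B,3k+2)+1)$. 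Setting $n := m + \Omega_m(3k+2)$ then places $[n, n+f(n)]$ inside $[m, m+g_{k,f}(m)]$, so all three summands are simultaneously controllable. The role of $\omega$ is to interface the $P$-continuity needed when transferring estimates between $u(m)$ and nearby points, which is required whenever the analysis from \cite{PR1986} passes through the projection of orbit values onto $A^{-1}0$.

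However, producing $n \leq \Gamma(k,f)$ with $n$ itself in the interval on which the metastable Cauchy estimate holds requires a second layer of bookkeeping: the anchor $m$ yields $N = m + \Omega_m(3k+2)$, but the final metastability claim is parametrized by $n$ living in $[N,\,N+f(N)]$ or similar. This is why the construction uses $h_{N,f}$ (to extend $f$ past $N$) and $j_{k,f}$ (to select the sub-metastability window at the inner level), and why $\Gamma$ finally takes the maximum of $\Gamma'(8k+7,j_{k,f})$ with $\Phi(8k+7,h_{N,f})$ over all admissible $N$. The factor $8k+7$ reflects the eight-fold accuracy refinement needed to sum the error contributions (three almost-orbit terms at $1/(8k+8)$, a Cauchy contribution at similar accuracy, plus the $P$-continuity loss).

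The main obstacle, as anticipated, is this nested metastability bookkeeping: one has to carefully choose the defining functions $g_{k,f}$, $h_{N,f}$ and $j_{k,f}$ so that the metastability output of $\Phi$ at the outer level actually falls into the interval that the inner application of Theorem \ref{thm:PRquant} requires, and so that the combined bound respects monotonicity in the functional arguments. This is exactly the interleaving pattern already developed for the first-order case in \cite{PP2022}, and I expect the verification to proceed by mirroring that template, substituting the second-order stabilization rate $\Omega_s$ for its first-order counterpart and adapting the accuracy budget from the sharper triangle inequality enforced by the almost-orbit structure.
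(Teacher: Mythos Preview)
Your instinct to split $\|u(t)-u(t')\|$ through the orbit anchor $S_{1/2}(\cdot-m)u(m)$ is sound and would in fact yield a \emph{one-layer} bound: once you have $m$ from $\Phi$ with the almost-orbit estimate on $[0,g_{k,f}(m)]$, setting $n=m+\Omega_m(\cdot)$ controls all three summands simultaneously, and no second pass through $\Phi$ via $h_{N,f},j_{k,f}$ is required. So your puzzlement about why the extra layer is needed is justified---in your decomposition it is not.

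The paper takes a genuinely different route. It does \emph{not} use the Cauchy conclusion of Theorem~\ref{thm:PRquant} for the middle term; instead it extracts from that proof the intermediate fact
\[
\forall t\geq \Omega_s(k)\quad \|S_{1/2}(t)u(s)-PS_{1/2}(t)u(s)\|\leq \tfrac{1}{k+1},
\]
which is what $\Omega_s(k)$ actually bounds (not the Cauchy stabilization time as you wrote). Combining this with the almost-orbit estimate and the $P$-continuity modulus $\omega(B,\cdot)$ gives a metastable bound on $\|u(t)-Pu(t)\|$ with rate $\Gamma'$. Only then does the paper run a \emph{second} metastability argument---the $h_{N,f},j_{k,f}$ machinery---to pass from smallness of $\|u(n_1)-Pu(n_1)\|$ to the Cauchy estimate, via $\|u(n_1)-S_{1/2}(t)u(n_1)\|\leq 2\|u(n_1)-Pu(n_1)\|$. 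This two-stage structure is what produces the specific $\Gamma$ in the statement, and it explains both the appearance of $\omega(B,3k+2)$ in the outer $\Phi$-call (needed to push the almost-orbit error through $P$) and the $8k+7$ accuracy budget.

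In short: your approach is correct and arguably tighter, but it proves a different (smaller) bound than the stated $\Gamma$; the paper's approach mirrors the \cite{PP2022} template verbatim, trading simplicity for a direct match with the first-order analysis. Your final paragraph anticipating the \cite{PP2022} template is exactly what the paper does---just be aware that its key intermediate object is $\|u(t)-Pu(t)\|$, not the orbit Cauchy difference.
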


Although the proof is, in essence, a careful reimplementation of the proof for the analogous quantitative result for Xu's theorem established in \cite{PP2022}, we nevertheless present the following proof in a self-contained manner.

\begin{proof}
For a given $s\geq 0$, note that by definition of $f_s$ there exist $y_{s,k}\in Ax_{s,k}$ with $\norm{x_{s,k}},\norm{y_{s,k}}\leq f_s(\omega(B+1, 3k+2))$ such that
\[
\|x_{s,k}-u(s)\|\leq \frac{1}{\omega(B+1, 3k+2)+1}\ \left(\leq \frac{1}{3(k+1)}\right).
\]
Since
\[
\|x_{s,k}-Px_{s,k}\|\leq \|x_{s,k}-p\|\leq \|x_{s,k}-u(s)\|+\|u(s)-p\|\leq B+1,
\]
we have as in the proof of Theorem \ref{thm:PRquant} that
\[
\forall k\in\N\ \forall t\geq\Omega_s(k) \left( \|S_{1/2}(t)x_{s,k}-PS_{1/2}(t)x_{s,k}\| \leq \frac{1}{3(k+1)}\right),
\]
with $\Omega_s(k)$ defined as above. This then yields
\begin{align*}
&\|S_{1/2}(t)u(s)-PS_{1/2}(t)u(s)\|\\
&\qquad\qquad\leq \norm{x_{s,k}-u(s)}+\norm{S_{1/2}(t)x_{s,k}-PS_{1/2}(t)x_{s,k}}\\
&\qquad\qquad\qquad\qquad +\|PS_{1/2}(t)x_{s,k}-PS_{1/2}(t)u(s)\|\\
&\qquad\qquad\leq \frac{1}{3(k+1)} + \frac{1}{3(k+1)} + \|PS_{1/2}(t)x_{s,k}-PS_{1/2}(t)u(s)\|
\end{align*}
which, as $\|S_{1/2}(t)x_{s,k}-S_{1/2}(t)u(s)\|\leq \|x_{s,k}-u(s)\|\leq 1/(\omega(B+1, 3k+2)+1)$, then yields
\[
\forall s\geq 0\ \forall k\in \N\ \forall t\geq \Omega_s(k) \left( \|S_{1/2}(t)u(s)-PS_{1/2}(t)u(s)\| \leq \frac{1}{k+1}\right).\tag{1}\label{eq:1}
\]

\medskip

\noindent For given $k\in\N$ and $f:\N\to \N$, we now consider the function $g_{k,f}$ as defined above. Using the assumption on $\Phi$, there is some $n_0\leq \Phi(\omega(B, 3k+2), g_{k,f})$ such that
\[
\forall t \in [0, g_{k,f}(n_0)] \left( \|S_{1/2}(t)u(n_0)-u(t+n_0)\|\leq \frac{1}{\omega(B, 3k+2)+1} \right).
\]
Since $\|S_{1/2}(t)u(n_0)-p\|, \|u(t+n_0)-p\|\leq B$, we conclude that
\[
\forall t \in [0, g_{k,f}(n_0)] \left( \|PS_{1/2}(t)u(n_0)-Pu(t+n_0)\|\leq \frac{1}{3(k+1)} \right).
\]
Thus, for $t\in[0, g_{k,f}(n_0)]$, we get by a simple triangle inequality that
\[
\|u(t+n_0)-Pu(t+n_0)\|\leq \frac{2}{3(k+1)} + \|S_{1/2}(t)u(n_0)-PS_{1/2}(t)u(n_0)\|.
\]
Using (\ref{eq:1}), this yields
\[
\forall t\in [n_0+\Omega_{n_0}(3k+2), n_0+g_{k,f}(n_0)] \left( \|u(t)-Pu(t)\|\leq \frac{1}{k+1} \right)
\]
which yields that
\[
\forall k\in\N,f:\N\to\N\ \exists n\leq \Gamma'(k,f)\ \forall t\in [n, n+f(n)] \left( \|u(t)-Pu(t)\|\leq \frac{1}{k+1} \right).\tag{2}\label{eq:2}
\]

\medskip

\noindent By assumption on $\Phi$, there is $n_0\leq \Phi(2k+1, h_{N,f})$ such that
\[
\forall t\leq h_{N,f}(n_0) \left( \|S_{1/2}(t)u(n)-u(t+n)\|\leq \frac{1}{2(k+1)} \right),
\]
with $h_{N,f}(n)$ defined as above. Writing $n:=\max\{N,n_0\}\in [N, \max\{N, \Phi(2k+1, h_{N,f})\}]$, we have
\[
\|S_{1/2}(t)u(n)-u(t+n)\|\leq \|u(n)-S_{1/2}(n-n_0)u(n_0)\|+ \|S_{1/2}(t+n-n_0)u(n_0)-u(t+n)\|
\]
using simple triangle inequalities and noting that $n-n_0\leq t+n-n_0\leq h_{N,f}(n_0)$, we conclude that
\begin{gather*}
\forall k,N\in\N,f:\N\to\N\ \exists n\in[N,\max\{N,\Phi(2k+1,h_{N,f})\}]\\
\forall t\leq f(n) \left( \|S_{1/2}(t)u(n)-u(t+n)\|\leq \frac{1}{k+1} \right).\tag{3}\label{eq:3}
\end{gather*}
	
\medskip

\noindent Let now $k\in\N$ and $f:\N\to\N$ be given. From (\ref{eq:2}) with the function $j_{k,f}(n)$ defined as above, there is an $n_0\leq \Gamma'(8k+7, j_{k,f})$ such that
\[
\forall t\in [n_0, n_0+j_{k,f}(n_0)] \left( \|u(t)-Pu(t)\|\leq \frac{1}{8(k+1)} \right).
\]
By (\ref{eq:3}), there exists $n_1\in[n_0, \max\{n_0, \Phi(8k+7, h_{n_0,f})\}]$ satisfying
\[
\forall t\leq f(n_1) \left( \|S_{1/2}(t)u(n_1)-u(t+n_1)\|\leq \frac{1}{4(k+1)} \right).
\]
Since $n_1\in [n_0,\max\{n_0, \Phi(8k+7, h_{n_0,f})\}]=[n_0, n_0+j_{k,f}(n_0)]$, we also have $\|u(n_1)-Pu(n_1)\|\leq 1/(8(k+1))$. Thus, by simple triangle inequalities, we get for any $t\leq f(n_1)$:
\[
\|u(n_1)-u(t+n_1)\|\leq\frac{1}{2(k+1)}.
\]
Noting additionally that
\[
n_1\leq \max\{n_0, \Phi(8k+7, h_{n_0,f})\}
\]
yields that 
\begin{gather*}
\forall k\in\N\ \forall f:\N\to \N\ \exists n_0\leq \Gamma'(8k+7, j_{k,f}), n_1\leq \max\{n_0, \Phi(8k+7,h_{n_0,f})\}\\
\forall t\leq f(n_1) \left( \|u(n_1)-u(t+n_1)\|\leq \frac{1}{2(k+1)} \right).\tag{4}\label{eq:4}
\end{gather*}
	
\medskip
	
\noindent Lastly, by another simple triangle inequality it follows that
\[
\forall t, t'\in [n_1, n_1+f(n_1)] \left( \|u(t)-u(t')\|\leq \frac{1}{k+1} \right)
\]
for the $n_1$ chosen by (\ref{eq:4}) which yields
\[
\forall k\in\N\ \forall f:\N\to \N\ \exists n\leq \Gamma(k, f)\ \forall t,t'\in [n, n+f(n)] \left( \|u(t)-u(t')\|\leq \frac{1}{k+1} \right)
\]
since 
\begin{align*}
n_1&\leq\max\{n_0, \Phi(8k+7, h_{n_0,f})\}\\
&\leq \max\{ \Gamma'(8k+7, j_{k,f}),\Phi(8k+7, h_{N,f})\}\mid N\leq \Gamma'(8k+7, j_{k,f}) \} \\
&= \Gamma(k,f).
\end{align*}
\end{proof}

This finitary result now in particular implies a usual ``infinitary" result on the convergence of almost-orbits of $\mathcal{S}_{1/2}$ as formulated in Theorem \ref{thm:XuGen} since metastability trivially (though non-effectively) implies back convergence of the respective sequence.

\begin{proof}[Proof of Theorem \ref{thm:XuGen}]
Let $X$ be uniformly convex and uniformly smooth with a strongly monotone duality map $J$ with value $M>0$ and let $A$ be m-accretive such that it satisfies the convergence condition and that $A^{-1}0\neq\emptyset$. Let $u$ be an almost-orbit of $\mathcal{S}_{1/2}$. By Proposition \ref{pro:CCchar}, there exists a modulus for the convergence condition $\Omega$. As in \cite{KKA2015}, it is rather immediate to see that $u$ has a rate of metastability $\Phi$. Then, after naturally fixing the other minor quantitative data as required in the above theorem (which naturally exist), we get by Theorem \ref{thm:Xustylequantmeta} that there exists a function $\Gamma$ such that
\[
\forall k\in\N\ \forall f:\N\to \N\ \exists n\leq \Gamma(k, f)\ \forall t,t'\in [n, n+f(n)] \left( \|u(t)-u(t')\|\leq \frac{1}{k+1} \right).
\]
In particular, by forgetting about $\Gamma$, we simply have
\[
\forall k\in\N\ \forall f:\N\to \N\ \exists n\in\mathbb{N}\ \forall t,t'\in [n, n+f(n)] \left( \|u(t)-u(t')\|\leq \frac{1}{k+1} \right)
\]
and this formulation implies the Cauchy property of $u(t)$ as follows: suppose $u(t)$ is not Cauchy for $t\to\infty$, i.e.
\[
\exists k\in\N\ \forall n\in\mathbb{N}\ \exists t,t'\geq n \left( \|u(t)-u(t')\|> \frac{1}{k+1} \right)
\]
and define $f(n)$ non-effectively such that $f(n)+n\geq t,t'$ for these two $t,t'$ guaranteed by this property. Then for that $k$ and $f$:
\[
\forall n\in\mathbb{N}\ \exists t,t'\in [n, n+f(n)] \left( \|u(t)-u(t')\|> \frac{1}{k+1} \right)
\]
which is in contradiction to the metastability of $u$.
\end{proof}

Lastly, similar to both \cite{KKA2015} and \cite{PP2022}, we can also give a second quantitative version of Theorem \ref{thm:XuGen} based on the previously discussed strengthened premise of a rate of convergence for the almost-orbit. This then takes the form of the following theorem.

\begin{theorem}\label{thm:Xuquantroc}
Let $X$ be uniformly convex and uniformly smooth with a strongly monotone duality map $J$ with value $M>0$, i.e. $\langle x-y,Jx-Jy\rangle\geq M\norm{x-y}^2$ for all $x,y \in X$. Let $A$ be $m$-accretive such that there exists a weak modulus for the convergence condition $\Omega$. Let $\mathcal{S}_{1/2}=\{S_{1/2}(t)\mid t\geq 0\}$ is the semigroup generated by $A$ via the exponential formula. Let $A^{-1}0\neq\emptyset$ with $p\in A^{-1}0$ and assume that $P$, the nearest point projection onto $A^{-1}0$, is uniformly continuous on bounded subsets of $X$ with a modulus $\omega:\mathbb{N}^2\to\mathbb{N}$, i.e. 
\[
\forall r,k\in\N\ \forall x,y\in B_r(p) \left( \|x-y\|\leq \frac{1}{\omega(r,k)+1} \to \|Px-Py\|\leq \frac{1}{k+1} \right),
\]
and, without loss of generality, assume that $\omega(r,k)\geq k$ for all $r,k\in\mathbb{N}$. Let $u$ be an almost orbit with a rate of convergence $\Phi:\mathbb{N}\to\mathbb{N}$ on the almost-orbit condition, i.e.
\[
\forall k\in\mathbb{N}\ \forall s\geq \Phi(k)\left( \sup_{t\geq 0}\norm{u(s+t)-S_{1/2}(t)u(s)}\leq\frac{1}{k+1}\right).
\]
Let $B\in\mathbb{N}^*$ be such that $\|u(t)-p\|\leq B$ for all $t\geq 0$ and let $f_s:\mathbb{N}\to\mathbb{N}$ for $s\geq 0$ be such that $f$ is nondecreasing and
\begin{align*}
&\forall n\in\mathbb{N}\ \exists x_{s,n},y_{s,n}\in X\bigg(y_{s,n}\in Ax_{s,n}\\
&\qquad\qquad\qquad\land \norm{x_{s,n}},\norm{y_{s,n}}\leq f_s(n)\land \norm{x_{s,n}-u(s)}\leq\frac{1}{n+1}\bigg).
\end{align*}
Then we have
\[
\forall k\ \forall t,t'\geq \max\{\Phi(8k+7),s^*+\Omega_{s^*}(24k+23)\}\left(\norm{u(t)-u(t')}\leq\frac{1}{k+1}\right)
\]
where $s^*=\Phi(\omega(B,24k+23))$ and where $\Omega_s(k)$ is defined as in Theorem \ref{thm:Xustylequantmeta}.
\end{theorem}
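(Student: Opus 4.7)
The plan is to follow the same structural logic as the proof of Theorem \ref{thm:Xustylequantmeta}, but to exploit the stronger (pointwise) rate of convergence $\Phi$ on the almost-orbit condition to cut out the construction of the auxiliary functions $h_{N,f}$, $j_{k,f}$ and $g_{k,f}$. The threshold $s^*:=\Phi(\omega(B,24k+23))$ plays the role of the ``good starting point'' that in the metastable case was produced by $\Phi$ from these auxiliary functions, and from it the final Cauchy rate is read off via one three-term and one two-term triangle inequality, without any further appeal to $\Phi$ beyond instantiating it once more at $8k+7$.

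As a first step I would reproduce (\ref{eq:1}) from the proof of Theorem \ref{thm:Xustylequantmeta} verbatim: for every $s\geq 0$, $k\in\N$ and $t\geq \Omega_s(k)$,
\[
\norm{S_{1/2}(t)u(s)-PS_{1/2}(t)u(s)}\leq \frac{1}{k+1},
\]
which is proved exactly as there, by applying Theorem \ref{thm:PRquant} to the approximations $x_{s,k}\in\mathrm{dom}A$ of $u(s)$ guaranteed by $f_s$ and then pushing the error through $\omega$ and the nonexpansivity of $S_{1/2}$. I would then specialize this at $s=s^*$ with parameter $24k+23$ and combine it with the two bounds
\[
\norm{u(t)-S_{1/2}(t-s^*)u(s^*)}\leq \frac{1}{\omega(B,24k+23)+1}\leq \frac{1}{24(k+1)}
\]
(from the rate of convergence $\Phi$, applicable because $s^*\geq \Phi(\omega(B,24k+23))$) and
\[
\norm{Pu(t)-PS_{1/2}(t-s^*)u(s^*)}\leq \frac{1}{24(k+1)}
\]
(from the modulus $\omega$ of uniform continuity of $P$, using that both $u(t)$ and $S_{1/2}(t-s^*)u(s^*)$ lie in $B_B(p)$ by nonexpansivity of $\mathcal{S}_{1/2}$). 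A three-term triangle inequality (where the middle term is bounded via the specialization of (\ref{eq:1}) at $s^*$) then gives
\[
\norm{u(t)-Pu(t)}\leq \frac{3}{24(k+1)}=\frac{1}{8(k+1)}
\]
for every $t\geq s^*+\Omega_{s^*}(24k+23)$.

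For the Cauchy step I would take $t,t'\geq \max\{\Phi(8k+7),s^*+\Omega_{s^*}(24k+23)\}$ and assume WLOG $t\leq t'$, so that $h:=t'-t\geq 0$. The nonexpansivity of $\mathcal{S}_{1/2}$ together with the fact that $Pu(t)\in A^{-1}0$ is a fixed point of every $S_{1/2}(h)$ yields the key estimate $\norm{S_{1/2}(h)u(t)-u(t)}\leq 2\norm{u(t)-Pu(t)}$ (this is precisely the inequality already exploited at the end of the proof of Theorem \ref{thm:PRquant}), so the previous step gives $\norm{S_{1/2}(h)u(t)-u(t)}\leq 1/(4(k+1))$. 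Combining this with $\norm{S_{1/2}(h)u(t)-u(t')}\leq 1/(8(k+1))$, which follows from $t\geq \Phi(8k+7)$ and the rate-of-convergence premise, a final triangle inequality gives
\[
\norm{u(t)-u(t')}\leq \frac{1}{4(k+1)}+\frac{1}{8(k+1)}=\frac{3}{8(k+1)}\leq \frac{1}{k+1},
\]
which is the claim. The main obstacle is really just careful bookkeeping of the error budget: all non-trivial inputs are already established in Theorem \ref{thm:PRquant} and in the (\ref{eq:1})-fragment of the proof of Theorem \ref{thm:Xustylequantmeta}, and the specific constants $8k+7$ and $24k+23$ appearing in the statement are chosen precisely so that the three-term triangle inequality for $\norm{u(t)-Pu(t)}$ and the subsequent two-term triangle inequality for $\norm{u(t)-u(t')}$ each fit inside the final target error $1/(k+1)$.
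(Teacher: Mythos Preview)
Your proof is correct, but it takes a different route from the paper's own argument. The paper proceeds abstractly: it observes that a rate of convergence $\Phi$ on the almost-orbit condition trivially yields a rate of metastability $\Phi(k,f):=\Phi(k)$, plugs this into Theorem~\ref{thm:Xustylequantmeta} to obtain the metastability bound $\Gamma(k,f)$, and then notes by inspection that the independence of $\Phi$ on $f$ propagates through the construction so that $\Gamma(k,f)$ is in fact independent of $f$; a general result (Proposition~2.6 of \cite{KPin2022}) then says that an $f$-independent rate of metastability is already a Cauchy rate, and simplifying the resulting expression gives the stated bound. Your approach, by contrast, re-runs the proof of Theorem~\ref{thm:Xustylequantmeta} directly under the stronger hypothesis, which lets you dispense with the auxiliary functions $h_{N,f}$, $j_{k,f}$, $g_{k,f}$ and the iterated maxima altogether: the single point $s^*=\Phi(\omega(B,24k+23))$ replaces the metastable witness produced from $g_{k,f}$, and one further instantiation of $\Phi$ at $8k+7$ replaces the argument leading to (\ref{eq:3}) and (\ref{eq:4}). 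The paper's argument is more modular (it literally reduces the theorem to Theorem~\ref{thm:Xustylequantmeta} plus a black-box principle about metastability), while yours is more self-contained and makes transparent why exactly the constants $8k+7$ and $24k+23$ arise; in effect your proof is what the paper's final ``simplifying the expressions'' step unwinds to.
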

\begin{proof}
Given a rate of convergence $\Phi$ on the almost-orbit condition, it is easy to check that the function $\Phi(k,f):=\Phi(k)$ is a rate of metastability for the almost-orbit in the previous sense. Theorem \ref{thm:Xustylequantmeta} thus yields that $\Gamma(k,f)$ as constructed therein is rate of metastability for $u(t)$. By the results of \cite{KPin2022}, Proposition 2.6, a function $\rho:(0,\infty)\to (0,\infty)$ is a Cauchy rate of a sequence iff $\varphi(\varepsilon,f):=\rho(\varepsilon)$ is a rate of metastability. By inspecting the bound $\Gamma(k,f)$ and noting that $\Phi(k,f)=\Phi(k)$, we can easily see that this independence on $f$ transfers to $\Gamma$. Thus, by Proposition 2.6 of \cite{KPin2022} we get that $\Gamma(k):=\Gamma(k,f)$ is a rate of convergence and this yields the bound given above by simplifying the expressions given in Theorem \ref{thm:Xustylequantmeta} to define $\Gamma$ accordingly.
\end{proof}

\noindent
{\bf Acknowledgment:}  I want to thank Ulrich Kohlenbach and Pedro Pinto for valuable comments on this paper. 

The author was supported by the
`Deutsche Forschungs\-gemein\-schaft' Project DFG KO 1737/6-2.

\bibliographystyle{plain}
\bibliography{ref}

\begin{thebibliography}{10}

\bibitem{Bar1972}
V.~Barbu.
\newblock {A class of boundary problems for second order abstract differential
  equations}.
\newblock {\em Journal of the Faculty of Science, University of Tokyo},
  19:295--319, 1972.

\bibitem{Bar1976}
V.~Barbu.
\newblock {\em {Nonlinear semigroups and differential equations in Banach
  spaces}}.
\newblock Springer Netherlands, 1976.

\bibitem{Bre1972}
H.~Brezis.
\newblock {Equations d’evolution du second ordre associees a des operateurs
  monotones}.
\newblock {\em Israel Journal of Mathematics}, 12:51--60, 1972.

\bibitem{BP1970}
H.~Brezis and A.~Pazy.
\newblock {Accretive sets and differential equations in Banach spaces}.
\newblock {\em Israel Journal of Mathematics}, 8:367--383, 1970.

\bibitem{CL1971b}
G.~Crandall and T.M. Liggett.
\newblock {Generation of semigroups of nonlinear transformations on general
  Banach spaces}.
\newblock {\em American Journal of Mathematics}, 93:265--298, 1971.

\bibitem{GeK2008}
P.~Gerhardy and U.~Kohlenbach.
\newblock {General logical metatheorems for functional analysis}.
\newblock {\em Transactions of the American Mathematical Society},
  360:2615--2660, 2008.

\bibitem{Koh2005}
U.~Kohlenbach.
\newblock {Some logical metatheorems with applications in functional analysis}.
\newblock {\em Transactions of the American Mathematical Society},
  357(1):89--128, 2005.

\bibitem{Koh2008}
U.~Kohlenbach.
\newblock {\em {Applied Proof Theory: Proof Interpretations and their Use in
  Mathematics}}.
\newblock Springer Monographs in Mathematics. Springer-Verlag Berlin
  Heidelberg, 2008.

\bibitem{Koh2019}
U.~Kohlenbach.
\newblock {Proof-theoretic Methods in Nonlinear Analysis}.
\newblock In B.~Sirakov, P.~Ney de~Souza, and M.~Viana, editors, {\em Proc. ICM
  2018}, volume~2, pages 61--82. World Scientific, 2019.

\bibitem{KKA2015}
U.~Kohlenbach and A.~Koutsoukou-Argyraki.
\newblock {Rates of convergence and metastability for abstract Cauchy problems
  generated by accretive operators}.
\newblock {\em Journal of Mathematical Analysis and Applications},
  423:1089--1112, 2015.

\bibitem{KPin2022}
U.~Kohlenbach and P.~Pinto.
\newblock {Quantitative translations for viscosity approximation methods in
  hyperbolic spaces}.
\newblock {\em Journal of Mathematical Analysis and Applications}, 507:125823,
  33 pages, 2022.

\bibitem{MK1982}
I.~Miyadera and K.~Kobayasi.
\newblock {On the asymptotic behaviour of almost-orbits of nonlinear
  contraction semigroups in Banach spaces}.
\newblock {\em Nonlinear Analysis: Theory, Methods \& Applications},
  6(4):349--365, 1982.

\bibitem{NR1979}
O.~Nevanlinna and S.~Reich.
\newblock {Strong convergence of contraction semigroups and of iterative
  methods for accretive operators in Banach spaces}.
\newblock {\em Israel Journal of Mathematics}, 32:44--58, 1979.

\bibitem{Paz1978}
A.~Pazy.
\newblock {Strong convergence of semigroups on nonlinear contractions in
  Hilbert space}.
\newblock {\em Journal of Mathematical Analysis and Applications}, 34:1--35,
  1978.

\bibitem{Paz1983}
A.~Pazy.
\newblock {\em {Semigroups of Linear Operators and Applications to Partial
  Differential Equations}}.
\newblock Applied Mathematical Sciences. Springer New York, NY, 1983.

\bibitem{PP2022}
P.~Pinto and N.~Pischke.
\newblock {On computational properties of Cauchy problems generated by
  accretive operators}.
\newblock {\em ArXiv e-prints}, 2023.
\newblock arXiv, math.AP, 2301.06880.

\bibitem{Pis2022b}
N.~Pischke.
\newblock {A proof-theoretic metatheorem for nonlinear semigroups generated by
  an accretive operator and applications}.
\newblock {\em ArXiv e-prints}, 2023.
\newblock arXiv, math.LO, 2304.01723.

\bibitem{Pis2022}
N.~Pischke.
\newblock {Logical metatheorems for accretive and (generalized) monotone
  set-valued operators}.
\newblock 2023.
\newblock To appear in Journal of Mathematical Logic.

\bibitem{PR1986}
E.I. Poffald and S.~Reich.
\newblock {An incomplete Cauchy problem}.
\newblock {\em Journal of Mathematical Analysis and Applications},
  113(2):514--543, 1986.

\bibitem{Spe1949}
E.~Specker.
\newblock {Nicht konstruktiv beweisbare S\"atze der Analysis}.
\newblock {\em Journal of Symbolic Logic}, 14:145--158, 1949.

\bibitem{Tao2008b}
T.~Tao.
\newblock {Norm Convergence of Multiple Ergodic Averages for Commuting
  Transformations}.
\newblock {\em Ergodic Theory and Dynamical Systems}, 28:657--688, 2008.

\bibitem{Tao2008a}
T.~Tao.
\newblock {\em Structure and Randomness: Pages from Year One of a Mathematical
  Blog}, chapter {Soft analysis, hard analysis, and the finite convergence
  principle}.
\newblock American Mathematical Society, Providence, RI, 2008.

\bibitem{Xu2001}
H.-K. Xu.
\newblock {Strong asymptotic behavior of almost-orbits of nonlinear
  semigroups}.
\newblock {\em Nonlinear Analysis: Theory, Methods \& Applications},
  46(1):135--151, 2001.

\end{thebibliography}

\end{document}